\newtheorem{thm}{Theorem}
\theoremstyle{remark}
\theoremstyle{definition}
\newtheorem{exa}[thm]{Example}
\DeclareMathOperator{\lin}{lin}
\DeclareMathOperator{\dzii}{{\mathsf{Chi}}}
\DeclareMathOperator{\koo}{{\mathsf{root}}}
\DeclareMathOperator{\paa}{{\mathsf{par}}}
\newcommand*{\cbb}{\mathbb C}
\newcommand*{\dz}[1]{{\EuScript D}(#1)}
\newcommand*{\dzi}[1]{\dzii(#1)}
\newcommand*{\hh}{\mathcal H}
\newcommand*{\is}[2]{\langle#1,#2\rangle}
\newcommand*{\jd}[1]{\EuScript N(#1)}
\newcommand*{\nbb}{\mathbb N}
\newcommand*{\pa}[1]{\paa(#1)}
\newcommand*{\rbb}{\mathbb R}
\newcommand*{\smalloplus}{\raise0pt\hbox{$\scriptscriptstyle \oplus$}}
\newcommand*{\slam}{S_{\boldsymbol \lambda}}
\newcommand*{\tcal}{{\mathscr T}}
\newcommand*{\zbb}{\mathbb Z}
\newcommand*{\lambdab}{\boldsymbol \lambda}
\newcommand*{\Le}{\leqslant}
\begin{document}
\setstretch{1.2}
\title[Complex symmetric of weighted shifts. II]{On complex symmetric weighted shifts. II}

\author[P.\ Budzy\'{n}ski]{Piotr Budzy\'{n}ski}
\address{Piotr Budzy\'{n}ski, Katedra Zastosowa\'{n} Matematyki, Uniwersytet Rolniczy w Krakowie, ul.\ Balicka 253c, 30-198 Kra\-k\'ow, Poland}
\email{piotr.budzynski@urk.edu.pl}

\keywords{weighted shift, weighted shift on a directed tree, complex symmetric operator}
\subjclass[2020]{Primary 15B99, 47B37; Secondary 47A05, 47B93}

\begin{abstract}
Assorted weighted shifts over finite rooted directed trees are studied. Their complex symmetry is characterized.
\end{abstract}
\maketitle
\section{Introduction and preliminaries}
Suppose $\hh$ is a (complex separable) Hilbert space. An antilinear operator $C$ on $\hh$ satisfying $C^2=I$, where $I$ is the identity operator on $\hh$, and $\is{Cf}{Cg}=\is{g}{f}$ for all $f,g\in\hh$, is called a {\em conjugation}. A bounded operator $T$ on $\hh$ is {\em $C$-symmetric} if $T= CT^*C$.

Complex symmetric operators gained a lot of interest in the past following the research done by Garcia, Putinar, and Wogen (see \cite{g-jmaa-2007, g-otaa-2008, g-p-tams-2006, g-p-tams-2007, g-p-tmj-2008, g-w-jfa-2009, g-w-tams-2010}). Among the many papers devoted to studying the properties of complex symmetric operators was a one by Zhu and Li (see \cite{z-l-tams-2013}) characterizing (classical) bounded unilateral and bilateral complex symmetric weighted shifts. The characterization showed the importance of truncated weighted shifts. As it turns out, they are the building blocks for bounded complex symmetric weighted shifts. The same happens to be true in the unbounded case under some additional assumptions (see \cite{b-b-2023})

Classical weighted shifts have natural and important generalizations -- weighted shifts on directed trees. The class composed of the latter operators was introduced by Jabłoński, Jung, and Stochel (see \cite{j-j-s}). An initial motivation for introducing this class comes from the research on adjacency operators done Fujii, Sasaoka, and Watatani (see \cite{f-s-w-mj-1989}).

Let $\tcal=(V,E)$ be a directed tree ($V$ and $E$ stand for the sets of vertices and edges of $\tcal$, respectively). Set $\dzi u = \{v\in V\colon (u,v)\in E\}$ for $u \in V$. Denote by $\paa$ the partial function from $V$ to $V$ which assigns to a vertex $u\in V$ its parent $\pa{u}$ (i.e.\ a unique $v \in V$ such that $(v,u)\in E$). A vertex $u \in V$ is called a {\em root} of $\tcal$ if $u$ has no parent. A root is unique (provided it exists); we denote it by $\koo$. Set $V^\circ=V \setminus \{\koo\}$ if $\tcal$ has a root and $V^\circ=V$ otherwise. We say that $u \in V$ is a {\em branching vertex} of $V$, and write $u \in V_{\prec}$, if $\dzi{u}$ consists of at least two vertices. 

Assume $\lambdab=\{\lambda_v\}_{v \in V^{\circ}} \subseteq \cbb$ satisfies $\sup_{v\in V}\sum_{u\in\dzii{v}}|\lambda_u|^2<\infty$. Then the following formula
\begin{align}\label{xx}
(\slam f)(v)=
   \begin{cases}
\lambda_v \cdot f\big(\pa v\big) & \text{ if } v\in V^\circ,
   \\
0 & \text{ if } v=\koo,
   \end{cases}
\end{align}
defines a bounded operator on $\ell^2(V)$ (as usual, $\ell^2(V)$ is the Hilbert space of square summable complex functions on $V$ with standard inner product). We call it a {\em weighted shift on} $\tcal$ with weights $\lambdab$. It is known that
\begin{align*}
(\slam^* f)(v)=\sum_{u\in\dzii{v}}\overline{\lambda_u}f(u),\quad v\in V.
\end{align*}
This and other necessary facts concerning weighted shifts on directed trees can be found in a monograph \cite{j-j-s}. 

Studying weighted shifts on directed trees led to numerous interesting results (see e.g., \cite{b-j-j-s-jmaa-2013, b-j-j-s-aim-2017, b-j-j-s, j-j-s-jfa-2012, j-j-s-pams-2014, p-laa-2023}). It is worth mentioning that the research showed that the structure of the underlying tree is rigid in the case of selfadjoint and normal weighted shifts on directed trees, and having these properties reduces a weighted shift on a directed tree to a classical one (see \cite{j-j-s-caot-2013})

Having in mind all the mentioned above ii seems natural to address a problem of complex symmetry for directed tree generalizations of the classical truncated weighted shifts. In this short note we focus on two particular types operators that fall in this category.

Viewed as a weighted shift on a directed tree, a truncated weighted shift operator is associated with a finite rooted directed tree without branching vertices, or in other words, having just one branch (see Figure 1 below).
\vspace{3ex}
\begin{center}
\begin{tikzpicture}[scale = .6, transform shape]
\tikzstyle{every node} = [circle,fill=gray!30]

\node (e0) at (-3,1) {}; 
\node (e1) at (-1,1) {};
\node (e2) at (1,1) {}; 
\node (e3) at (3,1) {}; 
\node (e4) at (5,1) {}; 
\node (e6) at (8,1) {};

\draw[->] (e0) --(e1) node[pos=0.5,above = 0pt,fill=none] {}; 
\draw[->] (e1) --(e2) node[pos=0.5, below=0pt,fill=none] {};
\draw[->] (e2) --(e3) node[pos=0.5,below=0pt,fill=none] {};
\draw[->] (e3) --(e4) node[pos=0.5,above = 0pt,fill=none] {}; 
\draw[dashed,->] (e4) -- (e6) ;
\end{tikzpicture}
\end{center}
\begin{center}
Figure 1
\end{center}
\vspace{1ex}
The first generalizing operator, in some sense a simpler one, is associated with a rooted directed tree with one branching vertex and two equally long branches, meaning it arizes from adding another branch. We will denote such trees by $\tcal_{\kappa,\theta}$ (see Figure 2 below).
\vspace{3ex}
\begin{center}
\begin{tikzpicture}[scale = .6, transform shape]
\tikzstyle{every node} = [circle,fill=gray!30]

\node (e1kappa)[font=\footnotesize] at (-4,0) {}; 
\node (ekap+1)[font=\footnotesize] at (-2,0) {};
\node (e-1)[font=\footnotesize] at (1,0) {}; 
\node (e10) at (3,0) {}; 
\node (e11) at (5,1.5) {}; 
\node (e12) at (7,1.5) {}; 
\node (e13) at (10,1.5) {};
\node (e21) at (5,-1.5) {}; 
\node (e22) at (7,-1.5) {}; 
\node (e23) at (10,-1.5) {};

\draw[->] (e10) --(e11) node[pos=0.5,above = 0pt,fill=none] {}; 
\draw[->] (e10) --(e21) node[pos=0.5, below=0pt,fill=none] {};
\draw[->] (e11) --(e12) node[pos=0.5,above = 0pt,fill=none] {}; 
\draw[->] (e21) --(e22) node[pos=0.5,below=0pt,fill=none] {};
\draw[dashed,->] (e12) --(e13) node[pos=0.5,above = 0pt,fill=none] {}; 
\draw[dashed,->] (e22) --(e23) node[pos=0.5,below = 0pt,fill=none] {};
\draw[ ->] (e-1) --(e10) node[pos=0.5,above = 0pt,fill=none] {}; 
\draw[->] (e1kappa) -- (ekap+1);
\draw[dashed,->] (ekap+1) -- (e-1) ;
\end{tikzpicture}
\end{center}
\begin{center}
Figure 2
\end{center}
\vspace{1ex}
The second generalizing operator arises from looking at the graph associated to a truncated weighted shift weighted shifts operators with valency (the number of outgoing edges from a given vertex) in mind. The valency of that graph at all but one vertices equals to 1 and at the one that has valency not equal to 1 it is equal to 0 (such a vertex is called a leaf). Increasing valency to 2 at all the vertices except leaves we get a (complete) binary tree and this is the associated graph of the operator. Such trees will be denoted by $\tcal^2_\kappa$, where $\kappa$ is the depth of the tree (see Figure 3 below for the a binary of depth 3).
\vspace{3ex}
\begin{center}
\begin{tikzpicture}[scale = .6, transform shape, line cap=round,line join=round,x=1.2cm,y=1cm]
   \tikzstyle{every node} = [circle, fill=gray!30]
\node (v1)[font=\footnotesize]at (1,0){};

\node (v11)[font=\footnotesize] at (3.5,1.125){};
\node (v12)[font=\footnotesize] at (3.5,-1.125){};

\node (v111)[font=\footnotesize] at (6,3){};
\node (v112)[font=\footnotesize] at (6,1.25){};
\node (v121)[font=\footnotesize] at (6,-1.25){};
\node (v122)[font=\footnotesize] at (6,-3){};

\node (v1111)[font=\footnotesize] at (9,3.75){};
\node (v1112)[font=\footnotesize] at (9,2.75){};
\node (v1121)[font=\footnotesize] at (9,1.75){};
\node (v1122)[font=\footnotesize] at (9,0.75){};
\node (v1211)[font=\footnotesize] at (9,-1.75){};
\node (v1212)[font=\footnotesize] at (9,-0.75){};
\node (v1221)[font=\footnotesize] at (9,-3.75){};
\node (v1222)[font=\footnotesize] at (9,-2.75){};

\draw[->] (v1) --(v11) node[pos=0.6,below = -5pt,fill=none, font = \footnotesize] {};
\draw[->] (v1) --(v12) node[pos=0.6,below =  -3pt,fill=none, font = \footnotesize] {};
\draw[->] (v11) --(v111) node[pos=0.6,below = -5pt,fill=none, font = \footnotesize] {};
\draw[->] (v11) --(v112) node[pos=0.6,below =  -3pt,fill=none, font = \footnotesize] {};
\draw[->] (v12) --(v121) node[pos=0.6,below = -5pt,fill=none, font = \footnotesize] {};
\draw[->] (v12) --(v122) node[pos=0.6,below = -3pt,fill=none, font = \footnotesize] {};

\draw[->] (v111) --(v1111) node[pos=0.6,below = -3pt,fill=none, font = \footnotesize] {};
\draw[->] (v111) --(v1112) node[pos=0.6,below = -3pt,fill=none, font = \footnotesize] {};
\draw[->] (v112) --(v1121) node[pos=0.6,below = -3pt,fill=none, font = \footnotesize] {};
\draw[->] (v112) --(v1122) node[pos=0.6,below = -3pt,fill=none, font = \footnotesize] {};
\draw[->] (v121) --(v1211) node[pos=0.6,below = -3pt,fill=none, font = \footnotesize] {};
\draw[->] (v121) --(v1212) node[pos=0.6,below = -3pt,fill=none, font = \footnotesize] {};
\draw[->] (v122) --(v1221) node[pos=0.6,below = -3pt,fill=none, font = \footnotesize] {};
\draw[->] (v122) --(v1222) node[pos=0.6,below = -3pt,fill=none, font = \footnotesize] {};
   
\end{tikzpicture}
\end{center}
   \begin{center}
Figure 3
   \end{center}
\vspace{1ex}
With all the above in mind characterizing complex symmetry of the weighted shifts over directed trees that were mentioned before seems natural and interesting. We do it under  two additional assumptions in Theorems \ref{kaszuby1} and \ref{kaszuby2}. Namely, we assume that all the weights are nonzero and constant across the generations. Both of them make the weighted shifts we consider the most simple generalizations of the classical truncated weighted shift. Also, they make the proof elementary and quite simple to follow. Further, we provide very simple but informative examples concerning the subject.

This research is a continuation of \cite{b-b-2023} where unbounded complex symmetric  unilateral and bilateral weighted shifts were studied and in which generalizations of  results from \cite{z-l-tams-2013} were given. In the last part of paper we deal briefly with unbounded operators but the arguments don't use any methods outside the linear algebra and very basic operator theory.
\section{Two examples}
We begin with two elementary examples showing that complex symmetry of a weighted shift on a directed tree does not depend purely on its weights but also on the structure of the underlying directed tree even if the structure is very simple. 
\begin{exa}\label{91}
Let $\tcal=(V,E)$, where 
$$V=\{0, (1,1), (2,1), (2,2)\}, \quad E=\{(0, (1,1)), (0,(2,1)), ((2,1),(2,2))\}.$$
Let $\lambdab=\{\lambda_{1,1}, \lambda_{2,1}, \lambda_{2,2}\}\subseteq\cbb\setminus\{0\}$ with 
\begin{align*}
\lambda_{1,1} =\lambda_{2,1}\quad \text{and}\quad \lambda_{2,2}=\sqrt2\lambda_{2,1}.
\end{align*}
Finally, let $\slam$ be a weighted shift on $\tcal$ with weights $\lambdab$. Using \eqref{xx} we get
\begin{align}\label{x}
\slam e_0&=\lambda_{1,1}e_{1,1} + \lambda_{2,1}e_{2,1}=\lambda_{1,1}(e_{1,1} + e_{2,1}),\notag\\ 
\slam e_{2,1}&=\lambda_{2,2} e_{2,2}={\sqrt{2}}\lambda_{1,1}e_{2,2},\\ 
\slam e_{1,1}&=\slam e_{2,1}=0.\notag
\end{align}
Figure 4 below is a graphical representation of the above -- each node in the graph is related to a unique vector in the orthonormal basis $\{e_v\}_{v\in V}$ and for a fixed node $v\in V$, the outgoing arrows indicate what vectors from the orthonormal basis and what weights are used in a linear combination that serves as the value of $\slam$ on $e_v$.
\vspace{3ex}
\begin{center}
\begin{tikzpicture}[scale = .6, transform shape]
\tikzstyle{every node} = [circle,fill=gray!30]
\node (e0) at (1,0) {}; 
\node (e11) at (4,1) {$\lambda_{1,1}$}; 
\node (e21) at (4,-1) {$\lambda_{1,1}$}; 
\node (e22) at (7,-1) {${\sqrt{2}}\lambda_{1,1}$}; 

\draw[->] (e0) --(e11) node[pos=0.5,above = 0pt,fill=none] {}; 
\draw[->] (e0) --(e21) node[pos=0.5, below=0pt,fill=none] {};
\draw[->] (e21) --(e22) node[pos=0.5,below=0pt,fill=none] {}; 
\end{tikzpicture}
\end{center}
   \begin{center}
Figure 4
   \end{center}
Moreover, we have
\begin{align}\label{xxx}
\slam^*e_0=0,\
\slam^*e_{1,1}=\overline{\lambda_{1,1}}e_0,\
\slam^*e_{2,1}=\overline{\lambda_{2,1}}e_0,\
\slam^*e_{2,2}=\overline{\lambda_{2,2}}e_{2,1}.
\end{align}
Let $C$ be a unique conjugation on $\ell^2(V)$ satisfying
\begin{align}\label{kupa}
Ce_0=e_{2,2},\ Ce_{2,2}= e_{0},\ C e_{1,1}=\frac{1}{\sqrt2}\big(e_{2,1}-e_{1,1}\big),\ C e_{2,1}=\frac{1}{\sqrt2}\big(e_{1,1}+e_{2,1}\big).
\end{align}
Combining \eqref{x}, \eqref{xxx} and \eqref{kupa} one can easily show that  $\slam$ is $C$-symmetric.
\end{exa}
Expanding the trunk of the tree from Example \ref{91} leads to non complex symmetric weighted shift.
\begin{exa}
Let $\tcal=(E,V)$, where 
\begin{align*}
V=\{-1, 0, (1,1), (2,1), (2,2)\}\quad \text{and} \quad E=\{(-1,0),(0,(1,1)),(0,(21)), ((2,1),(2,2))\}.
\end{align*}
Given any $\lambdab=\{\lambda_0, \lambda_{1,1}, \lambda_{2,1}, \lambda_{2,2}\}\subseteq \cbb\setminus\{0\}$ we consider $\slam$ on $\ell^2(V)$ (see Figure 5).
\vspace{3ex}
\begin{center}
\begin{tikzpicture}[scale = .6, transform shape]
\tikzstyle{every node} = [circle,fill=gray!30]
\node (e-1)[font=\footnotesize] at (0,0) {}; 
\node (e0) at (2,0) {$\lambda_0$}; 
\node (e11) at (4,1) {$\lambda_{1,1}$}; 
\node (e21) at (4,-1) {$\lambda_{2,1}$}; 
\node (e22) at (6,-1) {$\lambda_{2,2}$}; 

\draw[->] (e0) --(e11) node[pos=0.5,above = 0pt,fill=none] {}; 
\draw[->] (e0) --(e21) node[pos=0.5, below=0pt,fill=none] {};
\draw[->] (e21) --(e22) node[pos=0.5,below=0pt,fill=none] {};
\draw[ ->] (e-1) --(e0) node[pos=0.5,above = 0pt,fill=none] {}; 
\end{tikzpicture}
\end{center}
\begin{center}
Figure 5
\end{center}

Elementary calculations give
\begin{align*}
\slam^2e_{-1}=\lambda_0\lambda_{1,1}e_{1,1}+\lambda_0\lambda_{2,1}e_{2,1},\ 
\slam^2e_{1,1}=\slam^2 e_{2,1}=\slam^2e_{2,2}=0,\ 
\slam^2e_{0}=\lambda_{2,1}\lambda_{2,2}e_{2,2}
\end{align*}
and
\begin{align*}
\slam^{*2}e_{-1}=\slam^{*2}e_0=\slam^{*2}e_{1,1}=\slam^{*2}e_{2,1}=0,\ \slam^{*2}e_{2,2}=\overline{\lambda_{2,1}\lambda_{2,2}}e_0.
\end{align*}
Assuming $\lambda_{1,1}\neq-\lambda_{2,1}$ we get a weighted shift that cannot be complex symmetric because the kernel $\jd{\slam^{*2}}$ is four dimensional while the kernel $\jd{\slam^2}$ is three dimensional.
\end{exa}
\section{Complex symmetry over $\tcal_{\kappa,\theta}$ and  $\tcal^2_\kappa$}
In this section we focus on complex symmetry of weighted shifts over directed trees $\tcal_{\kappa,\theta}$ and  $\tcal^2_\kappa$. In both cases we will assume:
\begin{itemize}
\item the weights are constant across the generations of the tree (which is trivially satisfied in the classical case),
\item the weights are nonzero.
\end{itemize}
 
First, we formally define $\tcal_{\kappa,\theta}$. Let $\kappa\in \zbb_+$ and $\theta \in\nbb$  with $\nbb=\{1,2,3 \ldots\}$ and $\zbb_+=\nbb\cup\{0\}$. Set
\allowdisplaybreaks
\begin{align*}
V_{\kappa,\theta} & = \big\{-k\colon k\in J_\kappa\big\} \cup \{0\} \cup \big\{(i,j)\colon i\in \{1,2\},\, j\in J_\theta\big\}
\end{align*}
and
\begin{align*}
E_{\kappa, \theta} =\big\{(-k,-k+1)\colon k\in J_\kappa\big\}&\cup \big\{(0, (1,1)), (0,(2,1))\big\}\\
&\cup\big\{((i,j),(i,j+1))\colon i\in\{1,2\},\ j\in J_{\theta-1}\big\},
\end{align*}
where $J_n = \{k \in \nbb\colon k\Le n\}$ for $n \in \zbb_+$. 
\begin{thm}\label{kaszuby1}
Let $\kappa\in \zbb_+$ and $\theta \in\nbb$. Let $\lambdab=\{\lambda_v\}_{V_{\kappa,\theta}^\circ}\subseteq \cbb\setminus\{0\}$ satisfy $\lambda_{1,j}=\lambda_{2,j}=:\lambda_j$, $j\in J_\theta$. Then the weighted shift operator $\slam$ on a directed tree $\tcal_{\kappa,\theta}=(V_{\kappa,\theta}, E_{\kappa, \theta})$ is complex symmetric if and only if the following conditions are satisfied:
\begin{itemize}
\item[(i)] $|\lambda_{1+j}|=|\lambda_{\theta+1-j}|$ for every $j\in J_{\theta-1}$,
\item[(ii)] for $\theta-\kappa=1$, $|\lambda_{-\kappa+j}|=|\lambda_{\theta-j+1}|$ for every $j\in J_{\kappa+\theta}$
\item[(iii)] for $\theta-\kappa\neq1$,
$\sqrt{2}|\lambda_{1}|=|\lambda_{\theta-\kappa}|$ and $|\lambda_{-\kappa+j}|=|\lambda_{\theta-j+1}|$ for every $j\in \big(J_{\kappa+\theta}\setminus\{\kappa\}\big)$.
\end{itemize}
\end{thm}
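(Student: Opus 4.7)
The strategy is to exploit the hypothesis $\lambda_{1,j}=\lambda_{2,j}$ and split $\slam$ orthogonally into two classical truncated weighted shifts, then apply the palindromic--modulus characterization of complex symmetry to each summand. Set $e_j^\pm:=\tfrac{1}{\sqrt 2}(e_{1,j}\pm e_{2,j})$ for $j\in J_\theta$ and
\[
\hh^+:=\lin\{e_{-k},\, e_0,\, e_j^+\colon k\in J_\kappa,\, j\in J_\theta\},\qquad \hh^-:=\lin\{e_j^-\colon j\in J_\theta\}.
\]
A short calculation using \eqref{xx} and $\lambda_{1,j}=\lambda_{2,j}=\lambda_j$ yields $\slam e_{-k}=\lambda_{-k+1}e_{-k+1}$, $\slam e_0=\sqrt 2\,\lambda_1 e_1^+$, $\slam e_j^\pm=\lambda_{j+1}e_{j+1}^\pm$ for $j\in J_{\theta-1}$, and $\slam e_\theta^\pm=0$. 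Hence $\hh^\pm$ reduce $\slam$ and $\slam=T^+\oplus T^-$, where $T^+$ is the classical truncated weighted shift on $\hh^+$ with the $\kappa+\theta$ nonzero weights $\lambda_{-\kappa+1},\ldots,\lambda_0,\sqrt 2\lambda_1,\lambda_2,\ldots,\lambda_\theta$, and $T^-$ is the classical truncated weighted shift on $\hh^-$ with the $\theta-1$ nonzero weights $\lambda_2,\ldots,\lambda_\theta$.

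Next I would prove: $\slam$ is complex symmetric iff both $T^+$ and $T^-$ are. The ``if'' direction is immediate by taking $C:=C^+\oplus C^-$, where $C^\pm$ implements the $C^\pm$-symmetry of $T^\pm$. For the converse, note that with all weights nonzero, $T^+$ and $T^-$ are nilpotent with a single Jordan block each, of the \emph{distinct} sizes $\kappa+\theta+1$ and $\theta$; the uniqueness of the largest Jordan block of $\slam$ lets one single out $\hh^-$ intrinsically (for example via invariants built from $\jd{\slam^\theta}$ and $\ob{\slam}$) and hence argue that, modulo an irrelevant unitary adjustment, any conjugation implementing the complex symmetry of $\slam$ must split along $\hh^+\oplus\hh^-$. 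After this reduction, the classical palindromic--modulus characterization of complex symmetric truncated weighted shifts from \cite{z-l-tams-2013,b-b-2023} applied to each $T^\pm$ converts the problem into equalities among the moduli of the weights of $T^\pm$. The condition that the $\theta-1$-tuple of weights of $T^-$ is palindromic in modulus is exactly (i); for $T^+$ the central position carries the weight $\sqrt 2\lambda_1$, paired by the palindrome either with itself (when $\theta-\kappa=1$, giving (ii)) or with $\lambda_{\theta-\kappa}$ (when $\theta-\kappa\neq1$, yielding the distinguished equality $\sqrt 2|\lambda_1|=|\lambda_{\theta-\kappa}|$ in (iii)); the remaining pairs give the equalities $|\lambda_{-\kappa+j}|=|\lambda_{\theta-j+1}|$ that appear in (ii) and (iii).

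The main obstacle is the ``only if'' step in the equivalence above, i.e., forcing the conjugation to split along $\hh^+\oplus\hh^-$; this is where the distinct--Jordan--size argument does its real work. A pedestrian alternative that avoids the abstract splitting step is to expand a putative $C$ in the basis $\{e_v\}$ and iteratively exploit the cascading constraints $C\,\jd{\slam^{*k}}=\jd{\slam^k}$ for $k=1,2,\ldots$, together with antilinearity, isometry and $C^2=I$, to pin down enough entries of $C$ to read off (i)--(iii) directly. Either route involves some careful index bookkeeping to reconcile the abstract palindrome on the weights of $T^+$ with the form in which (ii) and (iii) are stated, but this is routine.
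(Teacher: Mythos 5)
Your decomposition is the same one the paper uses: the vectors $\tfrac{1}{\sqrt2}(e_{1,j}\pm e_{2,j})$ and the resulting splitting of $\ell^2(V_{\kappa,\theta})$ into a ``long'' chain $\hh^+$ (carrying the trunk, the root-side vertices, and the symmetric combinations, with the weight $\sqrt2\lambda_1$ at the branching edge) and a ``short'' chain $\hh^-$ (the antisymmetric combinations, with weights $\lambda_2,\dots,\lambda_\theta$). Your bookkeeping at the end is also correct: the palindromic-modulus condition for $T^-$ is exactly (i), and for $T^+$ the central weight $\sqrt2\lambda_1$ is paired with itself when $\theta-\kappa=1$ and with $\lambda_{\theta-\kappa}$ otherwise, which reproduces (ii) and (iii). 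The sufficiency direction via $C=C^+\oplus C^-$ is fine and, if anything, cleaner than the paper's verification.

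The gap is the step you yourself flag as ``the main obstacle'': the implication that $\slam=T^+\oplus T^-$ complex symmetric forces $T^+$ and $T^-$ to be separately complex symmetric. This cannot be waved through by appealing to a general principle, because no such principle exists: for \emph{any} operator $A$ and any conjugation $K$, the direct sum $A\oplus KA^*K$ is complex symmetric (via the flip conjugation $(x,y)\mapsto(Ky,Kx)$) whether or not $A$ is, so ``a direct sum is complex symmetric iff its summands are'' is false in general. What saves you here is the specific structure -- two nilpotent single Jordan blocks of the distinct sizes $\kappa+\theta+1$ and $\theta$ -- but your proposed route (``invariants built from $\jd{\slam^\theta}$ and $\ob{\slam}$'') is not carried out, and it is not obvious how to isolate $\hh^-$ from $\jd{\slam^\theta}$ alone, since $\jd{\slam^\theta}$ contains a $\theta$-dimensional piece of $\hh^+$ as well. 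The paper does exactly this work concretely: it computes the full kernel chains \eqref{tere1} and \eqref{tere2}, feeds them into $C\jd{\slam^{*m}}=\jd{\slam^m}$ for every $m$ to get the two-dimensional identifications \eqref{tere3}, and then uses the endpoint cases together with $C^2=I$ to split these into the one-dimensional identifications \eqref{tere4}, which is precisely the statement that any implementing conjugation respects $\hh^+\oplus\hh^-$ (and, more finely, reverses each chain). Your ``pedestrian alternative'' of expanding $C$ against the cascading constraints $C\jd{\slam^{*k}}=\jd{\slam^k}$ is in fact the paper's argument; until either route is written out, the necessity half of your proof is incomplete.
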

\begin{proof}
Let us assume that $\lambdab=\{\lambda_v\}_{V^\circ}\subseteq (0,+\infty)$ (this can be done by \cite[Theorem 3.2.1]{j-j-s}).

We see that
\begin{align*}
\slam e_{-l} &= \lambda_{-l+1}e_{-l+1},\quad l\in J_\kappa,\quad 
&&\slam e_{0} = \lambda_{1}e_{1,1}+\lambda_{1}e_{2,1},\\
\slam e_{i,j}&=\lambda_{j+1}e_{i,j+1},\quad  j\in J_{\theta-1},\quad
&&\slam e_{i,\theta}=0,\quad i\in\{1,2\}.
\end{align*}

We put 
\begin{align*}
\hh_j^+=\bigvee\{e_{1,j}+e_{2,j}\},\quad \hh_j^-=\bigvee\{e_{1,j}-e_{2,j}\},\quad \hh_j=\hh_j^+\oplus\hh_j^-, \quad j\in J_\theta
\end{align*}
and
\begin{align*}
\hh_{-j}^+=\hh_{-j}=\cbb e_{-j},\quad j\in J_\kappa\cup\{0\}.
\end{align*}
Clearly, we have $\ell^2(V)=\bigoplus_{j\in J_\kappa} \hh_{-j}\oplus \hh_0\oplus \bigoplus_{j\in J_\theta} \hh_j$. 

It is a matter of simple computations that
\begin{align}\label{tere1}
\jd{\slam^{m}}= \hh_{\theta}\oplus \ldots \oplus \hh_{\theta-m+1},\quad m\in\nbb,
\end{align}
and
\begin{align}\label{tere2}
\jd{\slam^{*m}}=\hh_{-\kappa}^+\oplus\ldots \oplus\hh_{-\kappa+m-1}^+\oplus \hh_1^-\oplus \ldots \oplus\hh_m^-,\quad m\in\nbb,
\end{align}
with $\hh_j=\{0\}$ for $j<-\kappa$ and $\hh_l^+=\hh_l^-=\{0\}$ for $l>\theta$.

Assume that $\slam$ is complex symmetric with respect to a conjugation $C$. Using $C\jd{\slam^{*k}}=\jd{\slam^k}$ for $k\in\nbb$, we deduce from \eqref{tere1} and \eqref{tere2} that
\begin{align}\label{tere3}
C \big(\hh_{-\kappa+m}^+\oplus \hh_{m+1}^{-}\big)=\hh_{\theta-m}^+\oplus \hh_{\theta-m}^-,\quad m\in\zbb_+,
\end{align}
with $\hh_l^+=\{0\}$ for $l<-\kappa$ and $\hh_j^{-}=\{0\}$ for $j\leq 0$. Substituting $m=\kappa+\theta$ into \eqref{tere3} we get
\begin{align*}
C\hh_{\theta}^+=\hh_{-\kappa}^+\text{ and }C\hh_{\theta}^-=\hh_1^-.
\end{align*}
Substituting $m=1$ into \eqref{tere3}, we get
\begin{align}\label{bah1}
C(\hh_{-\kappa+1}^+\oplus \hh_2^-)&=\hh_{\theta-1}^+\oplus \hh_{\theta-1}^-.
\end{align}
Substituting $m=\kappa+\theta-1$ into \eqref{tere3} and using $C^2=I$, we get
\begin{align}\label{bah2}
C(\hh_{-\kappa+1}^+\oplus \hh_{-\kappa+1}^-)=\hh_{\theta-1}^+\oplus \hh_{\theta+\kappa}^-
\end{align}
Comparing \eqref{bah1} and \eqref{bah2} we get
\begin{align*}
C\hh_{\theta-1}^+=\hh_{-\kappa+1}^+\text{ and }C\hh_{2}^-=\hh_{\theta-1}^-.
\end{align*}
In general, we have
\begin{align}\label{tere4}
C\hh_{\theta-l}^+=\hh_{-\kappa+l}^+ \text{ and }C\hh_{1+l}^-=\hh_{\theta-l}^- \quad \text{for $l\in\zbb_+$.}
\end{align}

In the remainder of the proof we will show all the relations between the weights. For this we define another orthonormal basis of $\ell^2(V_{\kappa,\theta})$
\begin{align*}
f_{-j}&=e_{-j},\quad j\in J_\kappa\cup\{0\},\\
f_j&=\frac{1}{\sqrt2}(e_{1,j}+e_{2,j}), \quad j\in J_{\theta},\\
g_j&=\frac{1}{\sqrt2}(e_{1,j}-e_{2,j}),\quad j\in J_{\theta}.
\end{align*}
By \eqref{tere4}, we have
\begin{align*}
Cg_{1+j}=\delta_{j} g_{\theta-j},\quad j\in J_{\theta-1}\cup\{0\}
\end{align*}
with $\delta_{j}$ of modulus 1. Therefore, we have
\begin{align}\label{jo1}
\slam C g_{1+j}=\delta_j \slam g_{\theta-j}=\delta_j\lambda_{\theta-j+1}g_{\theta-j+1}, \quad j\in J_{\theta-1}. 
\end{align}
Comparing the above with
\begin{align}\label{jo2}
C\slam^* g_{1+j}=\lambda_{1+j}Cg_{j}=\lambda_{1+j}\delta_{j-1} g_{\theta-j+1}, \quad j\in J_{\theta-1},
\end{align}
implies (i).

Using \eqref{tere4} we see that
\begin{align*}
C f_{-\kappa+j}=\gamma_j f_{\theta-j}, \quad j\in J_{\theta+\kappa}\cup\{0\}
\end{align*}
with $\gamma_j$ of modulus 1. Thus we have
\begin{align}\label{jo3}
\slam Cf_{-\kappa+j}=\gamma_j\slam f_{\theta-j}=\gamma_j\mu_j \lambda_{\theta-j+1}f_{\theta-j+1},\quad j\in J_{\theta+\kappa},
\end{align}
with $\mu_j\in\{1,\sqrt{2}\}$ such that $\mu_j=\sqrt2$ if and only if $j=\theta$, and
\begin{align}\label{jo4}
C\slam^* f_{-\kappa+j}=\nu_j\lambda_{-\kappa+j} Cf_{-\kappa+j-1}=\nu_j\lambda_{-\kappa+j} \gamma_{j-1}f_{\theta-j+1},\quad j\in J_{\theta+\kappa}
\end{align}
with $\nu_j\in\{1,\sqrt2\}$ such that $\nu_j=\sqrt{2}$ if and only if $\kappa+1=j$. Comparing these two we get
\begin{align*}
\mu_j |\lambda_{\theta-j+1}|=\nu_j|\lambda_{-\kappa+j}|,\quad j\in J_{\theta+\kappa}.
\end{align*}
Using the above one can deduce (ii) and (iii). This proves the necessity part.

Now we show that (i)-(iii) are sufficient. Note that by (i), (ii) and (iii) 
\begin{align*}
\lambda_{1+j}\delta_{j-1}&=\delta_j\lambda_{\theta-j+1}, \quad j\in J_{\theta-1},\\
\nu_j\lambda_{-\kappa+j} \gamma_{j-1}&=
\gamma_j\mu_j \lambda_{\theta-j+1},\quad j\in J_{\theta+\kappa},
\end{align*}
with $\delta_j$'s and $\gamma_j$'s of modulus 1. These together with \eqref{tere4} can be used to define $C$ on $\hh$ uniquely. Moreover, in view \eqref{jo1}-\eqref{jo4}, $\slam$ is $C$ symmetric, which completes the proof.
\end{proof}
Now we address the problem of complex symmetry of weighted shift over a binary tree $\tcal_\kappa^2$. Formally the tree can be defined as follows. Let $\kappa\in \nbb$ be greater or equal to $2$. Set
\begin{align*}
V_\kappa^2=\{(k,l)\colon k \in J_{\kappa}\cup\{0\} \text{ and } l \in J_{2^k}\},    
\end{align*}
and
\begin{align*}
E_\kappa^2=\big\{\big((k,l),(k+1,2^l-1)\big)\colon k \in J_{\kappa}\cup\{0\},\ l \in J_{2^k}\big\}&\\ 
\cup \big\{\big((k,l),(k+1,2^l)\big)\colon k \in J_{\kappa}\cup\{0\},\ l \in J_{2^k}\big\}&,    
\end{align*}
\begin{thm}\label{kaszuby2}
Let $\kappa\in \nbb$ be greater or equal to $2$. Let $\lambdab=\{\lambda_v\}_{(V_{\kappa}^2)^\circ}\subseteq \cbb\setminus\{0\}$ satisfy $\lambda_{k,j}=:\lambda_k$, $j\in J_{2^k}$. Then the weighted shift operator $\slam$ on a directed tree $\tcal_{\kappa}^2=(V_{\kappa}^2, E_{\kappa}^2)$ is complex symmetric if and only if the following condition is satisfied
\begin{align}\label{rita2}
2|\lambda_{l+1}|=|\lambda_{\kappa-l}|,\quad l\in J_{\kappa}\cup\{0\}.
\end{align}
\end{thm}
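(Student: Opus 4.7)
The plan is to mirror the strategy of the proof of Theorem~\ref{kaszuby1}: reduce to positive weights, introduce a well-adapted orthonormal basis, analyze the kernel filtrations of $\slam$ and $\slam^*$ to constrain any candidate conjugation $C$, and pin down $C$ on two explicit families of test vectors by exploiting $\slam C=C\slam^*$.

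By \cite[Theorem~3.2.1]{j-j-s} I may assume $\lambdab\subseteq(0,+\infty)$. For each $k\in\{0,\ldots,\kappa\}$ set $\hh_k=\bigvee\{e_v: v\text{ lies at level }k\}$ and, exploiting that children come in pairs, form the normalized sum/difference vectors to identify $\hh_k\cong V^{\otimes k}$ with $V=\cbb^2$ carrying the orthonormal basis $\{s,a\}$. In this basis a direct check gives
\begin{align*}
\slam(v)=\sqrt{2}\,\lambda_{k+1}(v\otimes s),\quad \slam^*(v\otimes s)=\sqrt{2}\,\lambda_{k+1}\,v,\quad \slam^*(v\otimes a)=0
\end{align*}
for $v\in V^{\otimes k}$ with $k<\kappa$, while $\slam$ annihilates $V^{\otimes\kappa}$. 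A routine induction now yields $\jd{\slam^m}=\bigoplus_{k=\kappa-m+1}^{\kappa}\hh_k$, while $\jd{\slam^{*m}}$ equals the sum of $\hh_0,\ldots,\hh_{m-1}$ and, for $j\ge m$, the orthogonal complement inside $\hh_j$ of $V^{\otimes(j-m)}\otimes s^{\otimes m}$.

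The identity $C\jd{\slam^{*m}}=\jd{\slam^m}$ forces $C$ to exchange successive differences: $CK_r^*=K_r$, where $K_r^*:=\jd{\slam^{*(r+1)}}\ominus\jd{\slam^{*r}}$ and $K_r:=\hh_{\kappa-r}$. In particular $K_\kappa^*=\cbb\,s^{\otimes\kappa}$ and $K_\kappa=\cbb\,e_{\koo}$, so $C(s^{\otimes\kappa})=\gamma_0 e_{\koo}$ for some unimodular $\gamma_0$. Iterating $\slam C=C\slam^*$ along the purely symmetric chain $e_{\koo},s,s^{\otimes 2},\ldots,s^{\otimes\kappa}$ then determines
\begin{align*}
Cs^{\otimes k}=\gamma_0\prod_{i=1}^{k}\frac{\lambda_{\kappa-i+1}}{\lambda_i}\,s^{\otimes(\kappa-k)},\quad k\in J_\kappa\cup\{0\},
\end{align*}
and $\|Cs^{\otimes k}\|=1$ yields the palindromic identities $|\lambda_j|=|\lambda_{\kappa-j+1}|$. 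I then carry out the analogous game along the auxiliary chain $a,as,\ldots,as^{\kappa-1}$: since $as^{\kappa-1}$ spans the one-dimensional orthogonal complement of $s^{\otimes(\kappa-1)}$ inside $K_{\kappa-1}^*$, orthogonality to $Cs^{\otimes(\kappa-1)}$ in $\hh_1$ forces $C(as^{\kappa-1})=\eta\,a$ with $|\eta|=1$. Running $\slam^* C=C\slam$ backwards along this chain, together with the geometric observation that every level-$\kappa$ vector in $K_r^*$ must map under $C$ into $\jd{\slam^*}\cap\hh_{\kappa-r}$ (since $\slam$ annihilates it), forces each $C(as^r)$ to lie on a specific one-dimensional line in $\hh_{\kappa-r}$; the resulting norm identities interlock with the palindromic relations to produce~\eqref{rita2}.

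For the converse, under \eqref{rita2} I build $C$ level-by-level: along the $s$- and $as^r$-chains by the formulas dictated above, where the ratios are unimodular by hypothesis so each local map is an antilinear isometry, and on the remaining parts of $K_r^*$ (spanned by vectors $e_{\epsilon a s^r}$ with $\epsilon$ containing an $a$) by any suitable unitary symmetric matrix matching the complement in $\hh_{\kappa-r}$, in direct analogy with the freedom already used in the proof of Theorem~\ref{kaszuby1}. A direct verification modelled on \eqref{jo1}--\eqref{jo4} then gives $\slam=C\slam^*C$. The step I expect to be the main obstacle is the orthogonality bookkeeping in the third paragraph: several parallel chains must be controlled simultaneously at every level, and one has to confirm that the forced norm constraints are genuinely independent before combining them with the $s$-chain palindromy to collapse to~\eqref{rita2}.
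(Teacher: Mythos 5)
Your setup is sound and is in fact a finer version of what the paper does: the paper's necessity argument tracks only the single chain $f_k=\sum_{l}e_{k,l}$ (your $s^{\otimes k}$ up to normalization) and delegates sufficiency to the reader, whereas your tensor splitting $\hh_k\cong V^{\otimes k}$, the kernel formulas, and the forced exchanges $CK_r^*=K_r$ are all correct. The genuine gap is the last step. Along the $s$-chain your (correct) rules $\slam v=\sqrt2\,\lambda_{k+1}\,v\otimes s$ and $\slam^*(v\otimes s)=\sqrt2\,\lambda_{k+1}v$ make the $\sqrt2$'s cancel, so you obtain $|\lambda_j|=|\lambda_{\kappa-j+1}|$ with \emph{no} factor $2$; this already conflicts with \eqref{rita2}, and no "interlocking" with the $a$-chains can reintroduce a factor $2$ into a relation you have already derived without one. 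If you actually run the $a$-chain argument you sketch, then $C(a\otimes s^{\otimes(\kappa-1)})=\eta a$ together with $\slam C=C\slam^*$ yields the further identities $|\lambda_{1+i}|=|\lambda_{\kappa+1-i}|$, and combined with the $s$-chain palindromy these collapse to $|\lambda_1|=\cdots=|\lambda_\kappa|$ --- not to \eqref{rita2}. So the assertion that the norm constraints "produce \eqref{rita2}" is precisely the step that fails; note also that \eqref{rita2} applied at $l$ and at $\kappa-1-l$ forces $4|\lambda_{l+1}|=|\lambda_{l+1}|$, so \eqref{rita2} cannot hold for nonzero weights, and the converse direction of your argument inherits the same problem.

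The source of the discrepancy is a concrete, checkable identity. The paper computes $\slam^*f_\kappa=\lambda_\kappa f_{\kappa-1}$, but every vertex of generation $\kappa-1$ has two children, so $\slam^*f_\kappa=2\lambda_\kappa f_{\kappa-1}$ --- which is exactly what your formula $\slam^*(v\otimes s)=\sqrt2\,\lambda_\kappa v$ says after renormalizing $f_k=2^{k/2}s^{\otimes k}$. With the correct factor, the paper's own chain gives $|\lambda_1|=|\lambda_\kappa|$, in agreement with you. Rather than forcing your computation to land on \eqref{rita2}, you should flag this and prove the corrected statement $|\lambda_1|=\cdots=|\lambda_\kappa|$: necessity follows from your $s$-chain and level-one $a$-chain as above, and sufficiency is immediate from your decomposition, since every chain is then a truncated shift with constant weight $\sqrt2\,|\lambda|$ and hence complex symmetric, so $\slam$ is a direct sum of complex symmetric operators. (A quick sanity check: for $\kappa=2$ with $\lambda_1=\lambda_2$ one exhibits a conjugation directly, while \eqref{rita2} would deny complex symmetry.)
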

\begin{proof}
We assume without loosing generality that all $\lambda_v$'s are positive. For $k\in J_\kappa\cup\{0\}$ we set $f_k=\sum_{l\in J_{2^k}}e_{k,l}$. Assuming that $\slam$ is $C$-symmetric with a conjugation $C$ and using $C\jd{\slam^\kappa}=\jd{\slam^{*\kappa}}$ we get $C(\bigvee\{ f_\kappa\})=\bigvee\{ f_0\}$. Furthermore, we see that
\begin{align*}
\lambda_1 f_1=\slam f_0=C\slam^* Cf_0 =\alpha(\kappa) C\slam^* f_\kappa=\alpha(\kappa) \lambda_\kappa Cf_{\kappa-1}
\end{align*}
with $\alpha(\kappa)\in \cbb$ such that $|\alpha(\kappa)| \,\|f_\kappa\|=\|f_0\|$. Thus $|\alpha(\kappa)|=\frac{1}{\sqrt{2^\kappa}}$. Moreover, $C(\bigvee\{ f_{\kappa-1}\})=\bigvee\{ f_1\}$ and $2 |\lambda_1|= |\lambda_{\kappa}|$. Applying the same argument more times we deduce that
\begin{align*}
C \big(\cbb f_{\kappa-l}\big)=\cbb f_l,\quad l\in J_{\kappa}\cup\{0\}
\end{align*}
which yields \eqref{rita2} and shows that it is necessary for $\slam$ to be complex symmetric. 

It is a matter of further elementary calculations that \eqref{rita2} is also sufficient for complex symmetry of $\slam$ (one argues in a similar way as in the proof of Theorem \ref{kaszuby1}).
\end{proof}

\section{Infinite trees - two examples}
A truncated weighted shift is associated to a directed tree having a finite depth. Clearly, a directed tree of finite depth need not to be a finite one, so a natural question arises if an infinite directed tree of finite depth admits a complex symmetric weighted shift. Below we show that it is possible by providing an example of a complex symmetric weighted shift on an infinite broom-like tree of depth 1.
\begin{exa}
Let $V=\zbb_+$ and $E=\{(0,j)\colon j\in\nbb\}$. Let $\lambdab=\{\lambda_v\}_{v\in V^\circ}\subseteq (1,\infty)$ satisfy $\sum_{i\in\nbb}\lambda_i^2<\infty$ and other conditions which would be specified later. Let $\slam$ be a weighted shift on a directed tree $\tcal=(E,V)$ induced by $\lambdab$. It is clear that $\slam$ is a densely defined operator such  $\jd{\slam}=\bigvee\{e_j\colon j\in\nbb\}$. Moreover, by \cite{j-j-s}, $\jd{\slam^*}^\perp=\cbb f_0$ with $f_0=\frac{\sum_{i\in\nbb}\lambda_ie_i}{\big(\sum_{i\in\nbb}\lambda_i^2\big)^{1/2}}$. 

Suppose for a moment that $\slam$ is complex symmetric with respect to a conjugation $C$. Then $C\jd{\slam}^\perp=\jd{\slam^*}^\perp$ and thus $Ce_0=\alpha f_0$ for some complex $\alpha$ of modulus 1. Obviously this implies that $\slam C$ equals $C\slam^*$ on $\bigvee\{e_0, f_0\}$. Set 
\begin{align}\label{tjp0}
g_i=Ce_i, \quad i\in\nbb.
\end{align}
Since $C\slam^*e_i=\lambda_i Ce_0=\lambda_i\alpha f_0$, we see that $\slam g_i=\alpha \lambda_i f_0$ for every $i\in\nbb$. Thus
\begin{align*}
\slam\hat{g}_i=f_0,\quad i\in\nbb,
\end{align*}
with $\hat{g}_i=\frac{g_i}{\alpha\lambda_i}$, which implies
\begin{align}\label{tjp0.5}
\hat{g}_i=e_0+h_i,\quad i\in\nbb
\end{align}
with some $h_i\in\jd{\slam}\ominus\cbb f_0$. This yields $\frac{1}{\lambda_i^2}=\|\hat{g}_i\|^2=1+\|h_i\|^2$ for every $i\in\nbb$ and so
\begin{align}\label{tjp1}
 \|h_i\|^2=\frac{1-\lambda_i^2}{\lambda_i^2},\quad i\in\nbb.
\end{align}
We note here the assumption of $\lambda_i>1$. Moreover, since $0=\langle \hat{g}_i,\hat{g}_j\rangle=1+\langle h_i, h_j\rangle$ for $i\neq j$, we have
\begin{align}\label{tjp2}
\is{h_i}{h_j}=-1,\quad i,j \in\nbb \text{ such that }i\neq j.
\end{align}
All the above means that existence of a sequence $\{h_i\}_{i\in\nbb}\subseteq \jd{\slam}$ of vectors satisfying \eqref{tjp1} and \eqref{tjp2} is necessary for $\slam$ to be complex sefladjoint. It is also clear that given a sequence $\{h_i\}_{i\in\nbb}\subseteq \jd{\slam}$  satisfying \eqref{tjp1} and \eqref{tjp2} one can define conjugation $C$ via \eqref{tjp0} and \eqref{tjp0.5} such that $C\slam^*$ and $\slam C$ equal on $\lin\{e_i\colon i\in \nbb\}$, the linear span of $\{e_i\colon i\in \nbb\}$. Below we address the problem of existence of such a sequence $\{h_i\}_{i\in\nbb}$ using induction argument.

Let $\{f_j\colon j\in\nbb\}$ be an orthonormal basis of $\jd{\slam^*}\ominus\bigvee\{e_0, f_0\}$. Then $\{f_j\colon j\in\nbb\}\subseteq \jd{\slam}$. Let $h_1\in\cbb f_1$ satisfy \eqref{tjp1}. Set 
\begin{align*}
h_2=t_{21}h_1+s_{2}f_2
\end{align*}
with $t_{21}=\frac{-1}{\|h_1\|^2}=\frac{\lambda_1^2-1}{\lambda_1^2}$ and $s_{2}^2=\frac{1-\lambda_2^2}{\lambda_2^2}-\frac{\lambda_1^2}{1-\lambda_1^2}$. Clearly, $s_{2}$ is properly defined whenever $(1-\lambda_1^2)(1-\lambda_2^2)>\lambda_1^2\lambda_2^2$. In that case, one can easily verify that vectors in $\{h_1,h_2\}$ satisfy equalities in \eqref{tjp1} and \eqref{tjp2}. Now we assume that for $n\in\nbb$ there exists $\{h_1,\ldots , h_n\}\subseteq \lin\{f_1, \ldots, f_n\}\subseteq\jd{\slam}\ominus\cbb f_0$ such the equalities in \eqref{tjp1} and \eqref{tjp2} are satisfied. Put
\begin{align*}
h_{n+1}=t_{n+1,1}h_1+\ldots +t_{n+1,n}h_n+s_{n+1}f_{n+1}
\end{align*}
with some unspecified $t_{n+1,j}$ for $j\in J_{n}$ that, in view of \eqref{tjp1} and \eqref{tjp2}, have to satisfied
\begin{align}\label{neon1}
\begin{cases}
    1=&t_{n+1,1}\frac{\lambda_1^2-1}{\lambda_1^2}+t_{n+1,2}+\ldots +t_{n+1,n},\\
    1=&t_{n+1,1}+t_{n+1,2}\frac{\lambda_2^2-1}{\lambda_2^2}+\ldots +t_{n+1,n},\\
    &\ldots\\
    1=&t_{n+1,1}+t_{n+1,2}+\ldots +t_{n+1,n}\frac{\lambda_n^2-1}{\lambda_n^2},
\end{cases}
\end{align}
and $s_{n+1}\in \rbb$ such that
\begin{align*}
s_{n+1}^2
&=\frac{1-\lambda_{n+1}^2}{\lambda_{n+1}^2}-\big\| t_{n+1,1}h_1+\ldots +t_{n+1,n}h_n\big\|^2\\
&=\frac{1-\lambda_{n+1}^2}{\lambda_{n+1}^2}-\Big(t_{n+1,1}^2\frac{1-\lambda_1^2}{\lambda_1^2}+\ldots +t_{n+1,n}^2\frac{1-\lambda_{n}^2}{\lambda_n}-2\sum_{i\neq j}t_{n+1,i}t_{n+1,j}\Big)
\end{align*}
Using linear algebra arguments one can show that \eqref{neon1} is solvable if $\lambda_{n}\in(0,+\infty)$ is small enough when compared with weights in $\{\lambda_1,\ldots,\lambda_{n-1}\}$. At the same time, $s_{n+1}$ is well defined if $\lambda_{n+1}\in(0,+\infty)$ is small enough when compared with weights in $\{\lambda_1,\ldots\lambda_{n}\}$. Therefore, the induction step $n\to n+1$ can be done as long as $\{\lambda_1,\ldots, \lambda_{n+1}\}$ are decreasing fast enough to 0.

Summing up all the above we see that $\slam$ can be shown to satisfy $C\slam^*=\slam C$ on $\lin\{e_j\colon j\in \zbb_+\}$, whenever $\lambdab$ consists of weights decreasing to 0 sufficiently fast. If this is the case, we also have $C\slam^*=\slam C$ because $\slam$ is a bounded operator on $\ell^2(V)$.
\end{exa}
Considering complex symmetric weighted shifts over infinite directed trees of finite depth may be interesting for another reason. In \cite{b-b-2023} we posed the problem of whether there exists an unbounded complex selfadjoint operator $T$ such that the domain of $T^2$ is trivial (let us recall that a closed densely defined operator $T$ is complex selfadjoint if $T=CT^*C$ with some conjugation $C$). As it turns out, the latter property is easy to achieve if one considers $T$ to be a weighted shift on a directed tree. At the same time, one still has a lot of room for manipulating weights to ensure other properties. This was used in the past e.g., for constructing a hyponormal operator whose square had a trivial domain (see \cite{j-j-s-pams-2014}). Thus one may consider weighted shifts on directed trees as a promising tool for solving the above mentioned problem. This however is definitely not a straightforward matter as shown in the next example. Indeed, we consider the simplest broom-like tree $\tcal$ that admits a weighted shift $\slam$ on $\tcal$ such that $\dz{\slam^2}$ is not dense in the underlying $\ell^2$-space and we show that such a weighted shift cannot be complex selfadjoint. The problem of wheter such a $\slam$ can be complex symmetric (i.e., $\slam\subseteq C\slam^*C$) remains open.
\begin{exa}
Let $V=\{0\}\cup\big(\{1,2\}\times\nbb\big)$ and $E=\big\{(0,(1,j))\colon j\in\nbb\big\}\cup \big\{ ((1,j),(2,j))\colon j\in\nbb\big\}$. Let $\lambdab=\{\lambda_v\}_{v\in V^\circ}\subseteq (0,\infty)$ satisfy 
\begin{align}\label{tt1}
\text{$\sum_{j\in\nbb}\lambda_{1,j}^2<\infty$\quad and\quad $\sum_{j\in\nbb}\lambda_{1,j}^2\lambda_{2,j}^2=\infty$.}
\end{align}
Let $\slam$ be a weighted shift on a directed tree $\tcal=(E,V)$ induced by $\lambdab$. Then, by \eqref{tt1}, $\slam$ is a densely defined operator such that $e_0\notin\dz{\slam^2}$. We have
\begin{align}\label{tt2}
\jd{\slam}^\perp=\cbb e_0\oplus \big(\hh_1\ominus \cbb f_1\big)\oplus \cbb f_1,\quad 
\jd{\slam^*}^\perp=\cbb f_1\oplus \hh_2
\end{align}
and
\begin{align}\label{tt3}
\jd{\slam}=\hh_2,\quad
\jd{\slam^*}=\cbb e_0 \oplus \big(\hh_1\ominus \cbb f_1\big),
\end{align}
where $\hh_1=\bigvee\{e_{1,j}\colon j\in\nbb\}$, $f_1=\frac{\sum_{i\in\nbb}\lambda_{1,i}e_{1,i}}{\big(\sum_{i\in\nbb}\lambda_{1,i}^2\big)^{1/2}}$, and $\hh_2=\bigvee\{e_{2,j}\colon j\in\nbb\}$. Assuming $\slam$ is complex selfadjoint with respect to a conjugation $C$, we deduce from \eqref{tt2} and \eqref{tt3} that $C f_1=\alpha f_1$, with $|\alpha|=1$. This however leads to a contradiction since
$\|C\slam^* f_1\|=\big(\sum_{j\in\nbb}\lambda_{1,j}^2\big)^{1/2}$ but $Cf_1\notin \dz{\slam}$ due to \eqref{tt1}.
\end{exa}
\section*{Statesments and declarations}
The author was supported by the Ministry of Science and Higher Education of the Republic of Poland. The author declare no competing interests. 

\end{document}